\newtheorem{Theo}{Theorem}[section]
\newtheorem{Prop}[Theo]{Proposition}
\newtheorem{Cor}[Theo]{Corollary}
\newtheorem{Lemma}[Theo]{Lemma}
\theoremstyle{definition}
\newtheorem{Exam}[Theo]{Example}
\newtheorem{Remark}[Theo]{Remark}
\newtheorem{Remarks}[Theo]{Remarks}
\def\mystrut(#1,#2){\vrule height #1pt depth #2pt width 0pt}
\newcommand{\rep}{{\rm rep}}
\newcommand{\Hom}{{\rm Hom}}
\newcommand{\Ext}{{\rm Ext}}
\newcommand{\Z}{\mathbb{Z}}
\newcommand{\C}{\mathcal{C}}
\newcommand{\A}{\mathcal{A}}
\newcommand{\B}{\mathcal{B}}
\newcommand{\T}{\mathcal{T}}
\begin{document}

\author{Charles Paquette}
\address{Charles Paquette, Department of Mathematics and Computer Science, Royal Military College of Canada, Kingston, ON, K7K 7B4, Canada.}
\email{charles.paquette.math@gmail.com}
\thanks{The author is thankful to Henning Krause for pointing out Remark \ref{RemarkKrause}(1).}

\title[Generators in abelian categories]{Generators versus projective generators in abelian categories}

\begin{abstract}
Let $\mathcal{A}$ be an essentially small abelian category. We prove that if $\mathcal{A}$ admits a generator $M$ with ${\rm End}_{\mathcal{A}}(M)$ right artinian, then $\mathcal{A}$ admits a projective generator. If $\mathcal{A}$ is further assumed to be Grothendieck, then this implies that $\mathcal{A}$ is equivalent to a module category. When $\mathcal{A}$ is Hom-finite over a field $k$, the existence of a generator is the same as the existence of a projective generator, and in case there is such a generator, $\mathcal{A}$ has to be equivalent to the category of finite dimensional right modules over a finite dimensional $k$-algebra. We also show that when $\mathcal{A}$ is a length category, then there is a one-to-one correspondence between exact abelian extension closed subcategories of $\mathcal{A}$ and collections of Hom-orthogonal Schur objects in $\mathcal{A}$. \end{abstract}

\maketitle

\section{Introduction}

Let $\A$ be an abelian category. A natural and fundamental problem is to determine whether $\A$ is equivalent to a category of modules over a ring. It is well known that this is true if and only if $\A$ is co-complete and admits a compact projective generator, that is, an object $P \in \A$ which generates $\A$ (see below for the definition) and such that $\Hom_\A(P,-)$ is exact and commutes with arbitrary direct sums.

\medskip

In this paper, we consider the notion of a generator of $\A$. An object $M$ of $\A$ is a \emph{generator} of $\A$ if for any object $X$ of $\A$, we have an epimorphism $\oplus_{i \in I}M \to X$ where $I$ is some index set.
A (minimal) generator needs not, {\sl a priori}, be projective, since $\A$ does not necessarily have enough projective objects. In the first section of this paper, we will see that when $\A$ has a generator $M$ with ${\rm End}_\A(M)$ right artinian, then $\A$ also has a projective generator. In case $\A$ is further assumed to be Grothendieck, then it has to be equivalent to a module category over a right artinian ring. In case $\A$ is a length category or is Hom-finite over a field, then $\A$ is equivalent to the module category of finitely generated modules over a right artinian ring.

\medskip

When $\A$ is a length category, it need not have a generator. However, $\A$ has simple objects and these objects can be used to build all objects of $\A$ by successive extensions. Moreover, this set of objects need not be finite. In the second section, we consider Hom-orthogonal sets of Schur objects (or bricks) in $\A$ and prove that these are in bijection with the exact abelian extension-closed subcategories of $\A$. Finally, in the third section, we apply our results in the hereditary case, where the exact abelian extension closed subcategories are the same as the thick subcategories.

\medskip

The paper is self-contained and all proofs are elementary. The author has been informed by Henning Krause that some results of Section 1 can be derived by the well known Gabriel-Popescu theorem. An outline of Krause's argument will be given in Remark \ref{RemarkKrause}(1).

\section{Generators, projective generators and length categories}

Throughout, the symbol $\A$ always stands for an abelian category which is essentially small. We start by recalling some finiteness conditions on the objects of $\A$ and the notion of generator.

\medskip

An object $M \in \A$ is \emph{artinian} if any descending chain of subobjects of $M$ becomes stationary. The category $\A$ is \emph{artinian} if all objects of $\A$ are artinian. Similarly, an object $M \in \A$ is \emph{noetherian} if any ascending chain of subobjects of $M$ becomes stationary. The category $\A$ is \emph{noetherian} if all objects of $\A$ are noetherian. A non-zero object $X$ in $\A$ is called \emph{simple} or \emph{minimal} if it has no proper non-zero subobject. If $\A$ is both artinian and noetherian, then it is called a \emph{length category}. An object $X \in \A$ is of \emph{finite length} if it is both artinian and noetherian. Thus, for a finite length object $X$, there is a finite chain
$$0=X_n \subset X_{n-1} \subset \cdots \subset X_1 \subset X_0 = X$$
of subobjects of $X$ such that the quotients $X_{i-1}/X_{i}$ are simple for all $1 \le i \le n$. Such a chain is called a \emph{composition series} of $X$ and the length $n$ of this series is uniquely determined by $X$ and called the \emph{length} of $X$. This is known as (the categorical version of) the Jordan-Hölder theorem.

\medskip

A \emph{generator} of $\A$ is an object $M$ of $\A$ such that for any $X \in \A$, there is an epimorphism $\textstyle{\bigoplus}_{i \in I} \to X$ for some index set $I$. We will see that $\A$ having a generator $M$ with ${\rm End}_A(M)$ right artinian imposes many restrictions on $\A$. We start with the following lemma.

\begin{Lemma} \label{PropFinite}If $\A$ admits a generator and is artinian, then $\A$ has finitely many non-isomorphic simple objects.
\end{Lemma}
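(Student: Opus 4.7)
The plan is to show that every simple object of $\A$ must be a quotient of the generator $M$, and then use the artinian hypothesis to bound the number of distinct such quotients. For the first step: if $S$ is simple, the generator property gives an epimorphism $\bigoplus_{i \in I} M \to S$, so at least one of the components $M \to S$ is nonzero, and being a morphism into a simple object, it is automatically epi. Hence every isoclass of simple object of $\A$ appears as a simple quotient of $M$.

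Next I would argue by contradiction. Suppose there is an infinite sequence $S_1, S_2, \ldots$ of pairwise non-isomorphic simples. Pick epimorphisms $f_n : M \to S_n$ with kernels $K_n$, and form the descending chain
$$L_1 \supseteq L_2 \supseteq L_3 \supseteq \cdots, \qquad L_n := K_1 \cap K_2 \cap \cdots \cap K_n$$
of subobjects of $M$. Since $\A$ is artinian, $M$ is artinian, so this chain stabilizes: there is some $n$ with $L_n = L_{n+1}$, equivalently $L_n \subseteq K_{n+1}$.

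From the definition of $L_n$ as the kernel of the morphism $(f_1, \ldots, f_n) : M \to S_1 \oplus \cdots \oplus S_n$, we obtain a monomorphism $M/L_n \hookrightarrow S_1 \oplus \cdots \oplus S_n$. On the other hand, $L_n \subseteq K_{n+1}$ means $f_{n+1}$ factors as $M \twoheadrightarrow M/L_n \twoheadrightarrow S_{n+1}$, presenting $S_{n+1}$ as a simple quotient of a subobject of $S_1 \oplus \cdots \oplus S_n$. The finite object $S_1 \oplus \cdots \oplus S_n$ has an obvious composition series with factors $S_1, \ldots, S_n$; refining a composition series of $S_1 \oplus \cdots \oplus S_n$ through the subobject $M/L_n$ and then through $S_{n+1}$ shows, via Jordan–H\"older, that $S_{n+1}$ must be isomorphic to one of $S_1, \ldots, S_n$, contradicting our choice.

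The only real subtlety is the last step, where one has to know that in an arbitrary abelian category, the simple subquotients of a subobject (and hence of a quotient of a subobject) of a finite-length object lie among the composition factors of the ambient object; this is the one place where a careful appeal to Jordan–H\"older in the categorical setting is required, but it follows routinely from the fact that composition series of $S_1 \oplus \cdots \oplus S_n$ can be refined through any given subobject.
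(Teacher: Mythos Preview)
Your proof is correct and takes a genuinely different route from the paper's. The paper first decomposes $M$ into indecomposable summands $M_1 \oplus \cdots \oplus M_n$ (using the artinian hypothesis), reduces to a single $M_i$ with infinitely many simple quotients, and then builds a strictly descending chain inside $M_i$ by showing directly that the kernel of $M_i \twoheadrightarrow S_1$ still surjects onto every $S_j$ with $j>1$; iterating gives an infinite strictly descending chain. Your argument instead works globally in $M$, intersects the kernels $K_1,\ldots,K_n$, and extracts the contradiction from Jordan--H\"older once the chain stabilizes. Your approach avoids the preliminary Krull--Schmidt-type decomposition and replaces the paper's ad hoc diagram chase (showing $p_j f_1 \ne 0$) by a clean appeal to composition factors; the paper's approach, on the other hand, is entirely self-contained and never invokes Jordan--H\"older. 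Both are short and elementary, and neither is strictly more general than the other in this context.
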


\begin{proof} Assume that $\A$ is artinian and admits a generator $M$. Assume to the contrary that $\A$ has infinitely many simple objects, up to isomorphism.  Let $M=M_1 \oplus \cdots \oplus M_n$ be a decomposition of $M$ into indecomposable direct summands (which is guaranteed by $\A$ being artinian). Let $S$ be a simple object in $\A$. Since $S$ is simple, there exists $1 \le i \le n$ such that there is an epimorphism $M_i \to S$. Therefore, we may assume that there is $1 \le i \le n$ such that $M_i$ has infinitely many non-isomorphic simple quotients. Let
$\{S_j\}_{j \ge 1}$ be such an infinite collection of non-isomorphic simple quotients of $M_i$. For each $j$, let $K_j$ denote the kernel of a projection $M_i \to S_j$.
Consider the diagram
$$\xymatrix{0 \ar[r] & K_{1} \ar[r]^{f_1} & M_i \ar@{=}[d] \ar[r]^{p_1} & S_1 \ar[r] & 0 \\ 0 \ar[r] & K_{j}\ar[r]^{f_j} & M_i \ar[r]^{p_{j}} & S_{j} \ar[r] & 0}$$ where $j > 1$. Assume $p_{j}f_1=0$. Then there is a morphism $g : K_1 \to K_{j}$ with $f_{j}g=f_1$. Passing to the cokernels in the above diagram yields a non-zero morphism from $S_1$ to $S_{j}$, a contradiction. Thus, there is an epimorphism $K_1 \to S_j$ for all $j > 1$. Set $M_{i,1}:= K_1$. Repeating this process, for any $j \ge 1$, there is a proper subobject $M_{i,j+1}$ of $M_{i,j}$ with $\Hom_\A(M_{i,j+1}, S_p) \ne 0$ for all $p > j+1$. Therefore, we get a descending chain
$\cdots \subset M_{i,2} \subset M_{i,1} \subset M$
of proper inclusions, a contradiction.
\end{proof}

\begin{Remarks} 
(1) Observe that the fact that $\A$ is artinian is crucial. For instance, the category of finitely generated modules over $k[x]$ where $k$ is a field has a generator but is not artinian. It has infinitely many non-isomorphic simple objects indexed by the irreducible polynomials.

\medskip

\noindent (2) The fact that $\A$ has a generator is also crucial. Let $Q$ be a quiver with infinitely many vertices and no arrow and let $\A$ be the category of finite dimensional representations of $Q$ over a field $k$. Then $\A$ is artinian but has no generator. It has infinitely many non-isomorphic simple objects.
%
\end{Remarks}



\begin{Prop} \label{Lemma1} If $M$ has finite length then $M$ decomposes into a finite direct sum of indecomposable objects with local endomorphism rings. Moreover, ${\rm End}_\A(M)$ is semiperfect.
\end{Prop}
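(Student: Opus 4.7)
The plan is to establish the two statements in the classical Krull--Schmidt style. First I would handle the decomposition into indecomposables: since $M$ has finite length, any strictly descending (or ascending) chain of subobjects is finite, so an iterated splitting $M = N \oplus N'$ with $N, N'$ nonzero cannot continue forever; taking, at each stage, one summand as a proper subobject of $M$ produces a descending chain that must terminate. This yields $M = M_1 \oplus \cdots \oplus M_n$ with each $M_i$ indecomposable of finite length.

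Next I would prove that each $\mathrm{End}_\A(M_i)$ is local via Fitting's lemma. For a fixed endomorphism $f : M_i \to M_i$, the ascending chain $\ker(f) \subseteq \ker(f^2) \subseteq \cdots$ and the descending chain $\mathrm{im}(f) \supseteq \mathrm{im}(f^2) \supseteq \cdots$ both stabilize because $M_i$ is of finite length. For $n$ large enough, the standard argument yields $M_i = \ker(f^n) \oplus \mathrm{im}(f^n)$; indecomposability of $M_i$ then forces either $\ker(f^n) = 0$ (so $f$ is both monic and epic, hence an isomorphism) or $\mathrm{im}(f^n) = 0$ (so $f$ is nilpotent). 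To conclude that $\mathrm{End}_\A(M_i)$ is local, I would verify that $a$ or $1-a$ is a unit for every $a \in \mathrm{End}_\A(M_i)$: if $a$ is an isomorphism we are done, and otherwise $a$ is nilpotent by the dichotomy just established, in which case $1-a$ is inverted by the finite geometric series $1 + a + a^2 + \cdots + a^{N-1}$.

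Finally, for the semiperfect conclusion, I would use the characterization that a ring $R$ is semiperfect if and only if $1$ admits a decomposition $1 = e_1 + \cdots + e_n$ into orthogonal idempotents with each $e_i R e_i$ local. The decomposition $M = M_1 \oplus \cdots \oplus M_n$ gives a complete orthogonal set of idempotents $e_i \in \mathrm{End}_\A(M)$, namely the projections onto the $M_i$, and for each $i$ we have the identification $e_i \mathrm{End}_\A(M) e_i \cong \mathrm{End}_\A(M_i)$, which is local by the previous step. Hence $\mathrm{End}_\A(M)$ is semiperfect.

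The main obstacle is the Fitting step: one has to be careful that all the maneuvers (the stabilization of the kernel/image chains, the splitting $M_i = \ker(f^n) \oplus \mathrm{im}(f^n)$, and the conclusion that a monic epic is an isomorphism) are available in the abstract abelian category $\A$ rather than merely in a module category. Everything used is standard in any abelian category, but the proof should make this reliance on the finite length hypothesis, rather than on any ambient module structure, fully explicit.
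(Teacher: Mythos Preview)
Your proposal is correct and follows the same overall strategy as the paper: decompose $M$ into indecomposables using the chain conditions, invoke Fitting's lemma to get the isomorphism-or-nilpotent dichotomy for endomorphisms of each $M_i$, and then deduce semiperfectness of $\mathrm{End}_\A(M)$ from the idempotent decomposition. The only difference is one of detail: the paper states the decomposition and the Fitting step without elaboration and, for the semiperfect conclusion, simply cites \cite[Prop.~1.2]{LPP} and \cite[Cor.~4.4]{Krause}, whereas you spell out the argument via the characterization of semiperfect rings by complete orthogonal idempotents with local corners---which is precisely the content of those references.
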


\begin{proof}
It is clear that if $M$ is of finite length, then $M$ decomposes into a finite direct sum $M = M_1 \oplus \cdots \cdot \oplus M_r$ of indecomposable objects. Any $M_i$ is again of finite length. By Fitting's lemma, any endomorphism in ${\rm End}_\A(M_i)$ is an isomorphism or is nilpotent. Therefore, we get the first part of the statement. For the second part, we refer the reader to \cite[Prop. 1.2]{LPP} or \cite[Cor. 4.4]{Krause}.
\end{proof}

Recall that a full subcategory of $\A$ is \emph{exact abelian} if it is closed under taking kernels and cokernels in the ambient category $\A$. We denote by ${\rm fl}(\A)$ the full subcategory of $\A$ of those objects of finite length. This category is exact abelian and extension-closed.

%

\begin{Lemma} \label{Lemma2} Assume that $\A$ admits a generator $M$ such that ${\rm End}_\A(M)$ is right artinian. Then $M$ is of finite length.
\end{Lemma}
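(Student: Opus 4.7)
The plan is to use the functor $\Hom_\A(M,-)$ to transfer chain conditions from the right artinian ring $R := \End_\A(M)$ to the lattice of subobjects of $M$. A preliminary ingredient I would invoke is the Hopkins--Levitzki theorem: since $R$ is right artinian, it is also right noetherian, so both the ascending and descending chain conditions hold for right ideals of $R$.

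For each subobject $N \hookrightarrow M$, I would set $T_N := \Hom_\A(M,N)$ and identify it with a subset of $R$ via composition with the inclusion $N \hookrightarrow M$ (which is a monomorphism, so the identification is unambiguous). One checks easily that $T_N$ is a right ideal of $R$: for $f \in T_N$ and $r \in R$, the composite $fr$ has image contained in the image of $f$, hence in $N$. Moreover, $N_1 \subseteq N_2$ clearly implies $T_{N_1} \subseteq T_{N_2}$.

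The heart of the argument, and the only place I will use the generator hypothesis, is strict monotonicity: if $N_1 \subsetneq N_2$, then $T_{N_1} \subsetneq T_{N_2}$. Since $M$ generates $\A$, there is an epimorphism $\bigoplus_{i \in I} M \to N_2$ given by a family of morphisms $f_i \colon M \to N_2$ whose images jointly exhaust $N_2$. If every $f_i$ factored through $N_1$, the total image would lie in $N_1$, contradicting $N_1 \subsetneq N_2$; hence some $f_i$ lies in $T_{N_2} \setminus T_{N_1}$. This is the only delicate point, and I expect it to be the main (though small) obstacle.

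Combining these observations, any ascending or descending chain of subobjects of $M$ embeds strictly, in an order-preserving way, into a chain of right ideals of $R$, and therefore terminates. Thus the subobjects of $M$ satisfy both ACC and DCC, so $M$ has finite length, as required.
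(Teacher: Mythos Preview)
Your proposal is correct and follows essentially the same route as the paper: the paper also applies $\Hom_\A(M,-)$ to a chain of subobjects of $M$, invokes Hopkins--Levitzki for the noetherian half, and uses the generator hypothesis (phrased there as ``$\Hom_\A(M,-)$ is faithful'') to ensure strict inclusions are preserved. Your explicit argument for strict monotonicity is exactly what underlies the paper's appeal to faithfulness plus left exactness.
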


\begin{proof} Observe first that for any non-zero morphism $f: X \to Y$ in $\A$, since $M$ is a generator, there exists a morphism $g: M \to X$ such that $fg \ne 0$. Therefore, we see that $\Hom_\A(M,-)$ is faithful. Assume that $M$ is not artinian. Let
$$\cdots \subset M_2 \subset M_1 \subset M_0$$
be an infinite strictly descending chain of subobjects of $M$. Using the fact that $\Hom_\A(M,-)$ is left exact and faithful, we get an infinite strictly descending chain
$$\cdots \subset \Hom_\A(M,M_2) \subset \Hom_\A(M,M_1) \subset \Hom_\A(M,M_0)$$
of right ${\rm End}_\A(M)$-submodules of ${\rm End}_\A(M)$. This contradicts the fact that ${\rm End}_\A(M)$ is right artinian.
The proof of the fact that $M$ is noetherian is similar since by the Hopkins-Levitzki theorem, the ring ${\rm End}_\A(M)$ is also right noetherian.
\end{proof}

\begin{Remark} Note that if $M$ is of finite length, then ${\rm End}(M)$ need not be right artinian (although, as we have shown, it has to be semiperfect). For instance, let $\B$ be the category of right modules over a right artinian ring $R$ that is not left artinian. Note that $R_R$ has finite length in $\B$. Consider the category $\A = \B^{\rm op}$. Now, $R_R$ also has finite length in $\A$ and its endomorphism ring is isomorphic to $R^{\rm op}$, which is left artinian but not right artinian.
\end{Remark}

\begin{Theo} \label{PropCycles0}Assume that $\A$ admits a generator $M$ such that ${\rm End}_\A(M)$ is right artinian. Then both
${\rm fl}(\A)$ and $\A$ have a projective generator, which is a direct summand of $M$.
\end{Theo}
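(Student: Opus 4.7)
The plan is to use the semiperfect structure of $R := \End_\A(M)$ and the embedding $F := \Hom_\A(M,-)\colon \A \to \Mod R$ to extract a projective generator as a direct summand of $M$. By Lemma~\ref{Lemma2} and Proposition~\ref{Lemma1}, $M$ has finite length and decomposes as $M = \bigoplus_{i=1}^{r} M_i$ with each $\End_\A(M_i)$ local; correspondingly, a complete set of primitive orthogonal idempotents $e_1,\ldots,e_r \in R$ satisfies $F(M_i) = e_i R$, and $F$ restricts to an equivalence between ${\rm add}(M)$ and the finitely generated projective right $R$-modules. Moreover, $F$ is faithful, left exact, and reflects surjections: if $F(\alpha)$ is surjective for $\alpha\colon A \to B$ in $\A$, then every $\psi \colon M \to B$ factors through $\alpha$, so $M$ being a generator of $B$ forces $\alpha$ itself to be surjective.

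As a preliminary, $\A$ has only finitely many simple objects $S_1,\ldots,S_s$: each simple is a composition factor of some $M_i$ since $M$ generates, and Jordan--H\"older applied to $M$ bounds their number. Equivalently, ${\rm fl}(\A)$ is a length category in which $M$ is a generator---any epimorphism $\bigoplus_{i\in I} M \twoheadrightarrow X$ with $X$ noetherian trims to $M^n \twoheadrightarrow X$---so Lemma~\ref{PropFinite} applies.

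The core step is to identify, for each simple $S_j$, an indecomposable summand $P_j$ of $M$ that is projective in $\A$ and admits a surjection $P_j \twoheadrightarrow S_j$, and then take $P$ to be a direct sum of the $P_j$ (one copy of each non-isomorphic indecomposable summand appearing). The candidate $P_j$ is obtained by choosing a primitive idempotent $e_{i(j)} \in R$ such that the simple $R$-module $T_{i(j)} := e_{i(j)}R / e_{i(j)}J(R)$ embeds as a simple submodule of $\Hom_\A(M, S_j)$---motivated by the observation that if $P_j \twoheadrightarrow S_j$ were a projective cover of $S_j$ in $\A$, the induced map $F(P_j) = e_{i(j)}R \to F(S_j)$ would have image lying in the socle of $F(S_j)$. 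Setting $P_j := M_{i(j)}$, the corresponding nonzero element of $\Hom_R(F(P_j), F(S_j))$ lifts under the equivalence on ${\rm add}(M)$ to a morphism $P_j \to S_j$ in $\A$, which is surjective because $F$ reflects surjections.

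The principal obstacle is showing each such $P_j$ is projective in $\A$: while $F(P_j) = e_{i(j)} R$ is automatically projective in $\Mod R$, this does not transfer directly because $F$ is not right-exact in general, so the argument must exploit the socle-based choice of $e_{i(j)}$ together with the semiperfect and right-artinian structure of $R$. Granting this, $P$ is a projective direct summand of $M$ and generates $\A$ by a trace argument: if the $P$-trace of some $M_i$ were strictly smaller than $M_i$, then a simple quotient $T$ of $M_i$ modulo this trace would admit no surjection from $P$ (any such surjection would lift by projectivity of $P$ to a map $P \to M_i$ whose image lies in the $P$-trace, and hence maps to zero in $T$), contradicting that $T = S_j$ for some $j$ and that $P_j \subseteq P$ surjects onto $S_j$. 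Finally, the same $P$ serves as a projective generator for ${\rm fl}(\A)$: $P \in {\rm fl}(\A)$, remains projective in the exact abelian subcategory ${\rm fl}(\A)$, and any $X \in {\rm fl}(\A)$ is a quotient of a finite sum $P^n$ by noetherianness of $X$.
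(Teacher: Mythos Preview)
Your proposal identifies a promising module-theoretic route via $R=\End_\A(M)$, but it leaves the central step unproved: you explicitly ``grant'' that the socle-based choice $P_j=M_{i(j)}$ is projective in $\A$, and this is precisely the content of the theorem. As written, the proposal is a strategy rather than a proof. The missing argument, however, is short once the socle condition is unpacked. If $T_{i(j)}$ embeds in the socle of $F(S_j)$, there is a nonzero $\phi\colon M_{i(j)}\to S_j$ with $\phi\circ\psi=0$ for every radical morphism $\psi\colon M_k\to M_{i(j)}$ (any $k$); equivalently, every non-isomorphism $M_k\to M_{i(j)}$ has image contained in $\ker\phi$. Given any epimorphism $f\colon X\to M_{i(j)}$ in $\A$, choose an epimorphism $h\colon\bigoplus_{I}M\to X$; then $\phi fh\ne 0$, so some component $(fh)_{\alpha,k}\colon M_k\to M_{i(j)}$ is not radical, hence an isomorphism, and this produces a section of $f$. (A minor slip: the surjectivity of $P_j\to S_j$ follows simply from $S_j$ being simple, not from $F$ reflecting surjections, since you have not verified that $F(P_j)\to F(S_j)$ is onto.)

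By contrast, the paper avoids the $R$-module apparatus entirely. For each simple $S$ it builds a tower $S=E_0,E_1,E_2,\ldots$ in which each $E_{i+1}$ is a non-split extension of $E_i$ by a simple; every $E_i$ is indecomposable with unique simple top $S$, hence receives an epimorphism from $M$. Since $M$ has finite length the tower must terminate, yielding an object $P_S$ projective in ${\rm fl}(\A)$, which is then shown to be a summand of $M$ via Krull--Remak--Schmidt and projective in all of $\A$ by essentially the splitting argument above. Your approach, once the gap is filled, is more structural and singles out the correct summand of $M$ directly; the paper's is more elementary and constructive, and does not presuppose knowing where inside $M$ the projective cover sits.
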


\begin{proof}
By Lemma \ref{Lemma2}, we know that $M$ is of finite length. By Lemma \ref{PropFinite}, we know that ${\rm fl}(\A)$ has finitely many simple objects. Start with any simple object, say $S=E_0$. If $\Ext^1_\A(E_0,-)$ vanishes on all simple objects, then $E_0$ is projective in ${\rm fl}(\A)$. So assume otherwise. Let $S_1$ be a simple object with $\Ext^1_\A(E_0,S_1)\ne 0$. There is a non-split extension
$$0 \to S_1 \to E_1 \stackrel{g_1}{\longrightarrow} E_0 \to 0$$
where $E_1$ is indecomposable. Clearly, $E_1$ has a unique simple quotient $S$. In general, assume that $E_i$ for $i \ge 1$ has been constructed, is indecomposable and has a unique simple quotient $S$. If $\Ext^1_\A(E_i, -)$ vanishes on all simple objects of $\A$, then $E_i$ is projective in ${\rm fl}(\A)$. If not, let $S_{i+1}$ be a simple object with $\Ext^1_\A(E_i, S_{i+1})\ne 0$. Consider the non-split short exact sequence
$$0 \to S_{i+1} \to E_{i+1} \stackrel{g_i}{\longrightarrow} E_i \to 0$$
Let $g: E_{i+1} \to S'$ be an epimorphism with $S'$ simple. Since the sequence is non-split, $g$ factors through $g_i$ and, by induction, $S \cong S'$ is the unique simple quotient of $E_i$. Hence $E_{i+1}$ is indecomposable and has a unique simple quotient $S$. Assume that no $E_i$ is projective in ${\rm fl}(\A)$. Since $\A$ has a generator $M$ and all $E_i$ have a unique simple quotient $S$, there is an epimorphism from $M$ to $E_i$ for all $i \ge 0$. This is a contradiction since the $E_i$ have unbounded lengths and $M$ has finite length. Therefore, for each simple $S$, there is a projective object $P_S$ in ${\rm fl}(\A)$ with an epimorphism $P_S \to S$. If $S_1, \ldots, S_n$ is a complete list of the non-isomorphic simple objects of ${\rm fl}(\A)$, then $P:=\textstyle{\bigoplus}_{1 \le i \le n}P_{S_i}$ is a projective generator of ${\rm fl}(\A)$. Now, there is $m \ge 1$ with an epimorphism $M^m \to P$ which gives, by the projective property of $P$, that $P$ is a direct summand of $M^m$. This gives $M^m \cong P \oplus P'$. Now, it follows from Proposition \ref{PropFinite} that finite length objects decompose into finite direct sums of objects having local endomorphism rings. Therefore, we may use the Krull-Remak-Schmidt theorem for the above decomposition. Since the $P_{S_i}$ are all non-isomorphic, we get that $P$ is a direct summand of $M$. Since there is $r \ge 1$ with an epimorphism $P^r \to M$, we see that $P$ is a generator of $\A$. It remains to prove that $P$ is projective in $\A$. Equivalently, we need to prove that for $S$ a simple object, any epimorphism $f: X \to P_S$ splits. Since $P$ is a generator, we have an epimorphism $h: \textstyle{\bigoplus}_{i \in I}P \to X$. To prove that $f$ splits, we need to prove that $fh: \textstyle{\bigoplus}_{i \in I}P \to P_S$ splits. Let $u: P_S \to S$ be an epimorphism and, for $i \in I$, let $(fh)_i: P \to P_S$ be the restriction of $fh$ to the corresponding summand. Observe that if $u(fh)_i=0$ for all $i \in I$, then $ufh=0$, which is impossible. Therefore, there is some $i_0 \in I$ with $u(fh)_0 \ne 0$, which gives that $(fh)_0$ is an epimorphism in ${\rm fl}(\A)$ and hence, splits. This proves that $fh$ splits.
\end{proof}

\begin{Remarks} (1) Let $k$ be a field and consider $\A$ the category of finitely presented $k$-representations of the quiver $Q$ having two vertices and infinitely many arrows from one vertex to the other. Clearly, $\A$ is abelian with a generator $M$ but ${\rm End}_\A(M)$ is not right artinian. Observe that ${\rm fl}(\A)$ has no projective generator.

\medskip

\noindent (2) If $\A$ is a Hom-finite $k$-category where $k$ is a field, then ${\rm End}_\A(M)$ is always right artinian.
\end{Remarks}

Recall that $\A$ is \emph{Grothendieck} if (it is abelian and) it admits a generator, has arbitrary coproducts and filtered colimits of exact sequences are exact. The well known Gabriel-Popescu theorem \cite{GP} implies that any such category is a full subcategory of a module category.

\begin{Prop} \label{PropCycles1} Assume that $\A$ is a Grothendieck category having a generator $M$ with ${\rm End}_\A(M)$ right artinian. Then
$\A$ is a module category over a right artinian ring.
\end{Prop}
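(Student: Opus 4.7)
The plan is to apply Theorem \ref{PropCycles0} to produce a projective generator $P$ of $\A$ which is a direct summand of $M$, and then to invoke the classical Mitchell-Gabriel theorem stating that a cocomplete abelian category admitting a compact projective generator $P$ is equivalent to the category $\Mod R$ of right modules over $R := \End_\A(P)$. Cocompleteness is built into the Grothendieck hypothesis, so the proof reduces to two verifications: that $P$ is compact, and that $R$ is right artinian.

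Compactness is the main point. By Lemma \ref{Lemma2}, $M$ has finite length, and hence so does any direct summand of $M$; in particular $P$ is noetherian, and therefore finitely generated in the Grothendieck sense (every directed family of subobjects with union $P$ already contains $P$ as a term). In a Grothendieck category, an arbitrary coproduct $\textstyle{\bigoplus}_{i \in I} X_i$ is the directed union of its finite subcoproducts, and $\Hom_\A(P,-)$ commutes with directed unions of subobjects whenever its argument is finitely generated — this is exactly where the Grothendieck axiom AB5 enters in an essential way. Hence every morphism $P \to \textstyle{\bigoplus}_{i \in I} X_i$ factors through a finite subcoproduct, which is precisely compactness of $P$.

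With compactness in hand, Mitchell-Gabriel yields the equivalence $\A \simeq \Mod R$. To finish, note that the decomposition $M \cong P \oplus P'$ supplied by Theorem \ref{PropCycles0} produces an idempotent $e \in \End_\A(M)$ with $R \cong e\,\End_\A(M)\,e$; since corner rings of right artinian rings are again right artinian, the hypothesis on $\End_\A(M)$ transfers to $R$, so $\A \simeq \Mod R$ with $R$ right artinian. The only delicate point is thus compactness: without the Grothendieck hypothesis, finite length of $P$ alone would not suffice to commute $\Hom_\A(P,-)$ with infinite coproducts, and the Mitchell-Gabriel machinery could not be invoked.
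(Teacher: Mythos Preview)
Your argument is correct and follows the same overall blueprint as the paper: invoke Theorem~\ref{PropCycles0} to obtain a projective generator $P$ as a direct summand of $M$, verify that $P$ is compact, and then appeal to the Mitchell--Gabriel characterization of module categories. The only substantive difference lies in how compactness is established. The paper proceeds by d\'evissage: compactness is closed under extensions and $P$ has finite length, so it suffices to check that every simple object $S$ is compact; this is done by a direct computation using the AB5 identity ${\rm Im}f \cap \sum_J Z_J = \sum_J ({\rm Im}f \cap Z_J)$ applied to a morphism $f:S \to \bigoplus_i Z_i$. You instead observe that $P$, being noetherian, is finitely generated in the categorical sense, and then use the standard fact that in an AB5 category $\Hom_\A(P,-)$ preserves directed unions of subobjects whenever $P$ is finitely generated. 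Your route is slicker and avoids the reduction to simples, at the cost of quoting a (standard) lemma the paper does not assume; the paper's argument is more self-contained. You also make explicit that $\End_\A(P)\cong e\,\End_\A(M)\,e$ is right artinian as a corner of a right artinian ring, a point the paper leaves implicit.
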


\begin{proof} We know that $M$ is of finite length by Lemma \ref{Lemma2}. It follows from Theorem \ref{PropCycles0} that $M$ has a projective direct summand $P$ which is also a generator. In order to prove the statement, it suffices to prove that $P$ is compact. Observe that for a short exact sequence
$$0 \to X \to Y \to Z \to 0,$$
if $X,Z$ are compact, then so is $Y$. Therefore, since $P$ is of finite length, it suffices to prove that any simple object $S$ is compact. Let $S$ be simple. Let $f: S \to \oplus_{i \in I}Z_i$ be a non-zero morphism and let $Z:=\oplus_{i \in I}Z_i$. For each $i \in I$, let $q_i: Z_i \to Z$ be the canonical injection. For each finite subset $J$ of $I$, let $Z_J:=\sum_{j \in J}q_j(Z_j)$. Observe that the $Z_J$ for $J$ finite form a directed system with inclusions. Moreover, we have $Z=\sum_{J \subseteq I \; \text{finite}}Z_J$. Since $\A$ is Grothendieck, we have $${\rm Im}f \cap \sum_{J \subseteq I \; \text{finite}}Z_J = \sum_{J \subseteq I \; \text{finite}} {\rm Im}f \cap Z_J.$$ Since the latter is simple, at least one summand ${\rm Im}f \cap Z_{J'}$ is non-zero and simple and has to be equal to ${\rm Im}f$. Therefore, we have ${\rm Im}f \subseteq Z_{J'}$ which proves that $f$ factors through $\oplus_{j \in J'}Z_j$.
\end{proof}

Restricting to length categories, we get the following.

\begin{Prop} \label{Prop1} Assume that $\A$ is a length category having a generator $M$ with ${\rm End}_\A(M)$ right artinian. Then
$\A$ is equivalent to the module category of the finitely generated right modules over a right artinian ring.
\end{Prop}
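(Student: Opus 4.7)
The plan is to promote the projective generator produced by Theorem \ref{PropCycles0} to a Morita-type equivalence, taking advantage of the fact that a length category is noetherian. By Theorem \ref{PropCycles0}, there is a projective generator $P$ of $\A$ that is a direct summand of $M$. Writing $M \cong P \oplus P'$ and letting $e \in {\rm End}_\A(M)$ be the corresponding idempotent, we have ${\rm End}_\A(P) \cong e\,{\rm End}_\A(M)\,e$. Since corners of right artinian rings are right artinian, $R := {\rm End}_\A(P)$ is right artinian, so over $R$ every finitely generated right module is finitely presented and of finite length, i.e.\ $\mmod R = {\rm fl}(\Mod R)$.

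I would then study the functor $F := \Hom_\A(P,-) \colon \A \to \Mod R$, viewing $\Hom_\A(P,X)$ as a right $R$-module in the usual way. It is exact because $P$ is projective, and faithful because $P$ is a generator (as already noted in the proof of Lemma \ref{Lemma2}). The next step is to verify that $F$ lands in $\mmod R$. Because $P$ generates and $\A$ is noetherian (being a length category), for any $X \in \A$ the directed union of images of $P^n \to X$ over finite sums must stabilize to $X$; so there is an epimorphism $P^n \to X$ for some finite $n$. Applying $F$ produces a surjection $R^n \cong F(P^n) \to F(X)$, so $F(X)$ is finitely generated. Applying the same noetherian argument to the kernel of $P^n \to X$, every $X$ admits a finite presentation $P^m \to P^n \to X \to 0$ in $\A$.

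To finish, I would check that $F$ is fully faithful and essentially surjective onto $\mmod R$. For full faithfulness, fix $X,Y \in \A$ with presentation $P^m \to P^n \to X \to 0$. Applying $\Hom_\A(-,Y)$ and $\Hom_R(-,F(Y))$ to the presentations of $X$ and $F(X)$ respectively gives a commutative diagram with exact rows in which the natural maps $\Hom_\A(P^k,Y) \to \Hom_R(R^k,F(Y))$ (namely $F$ itself composed with the canonical isomorphism $F(P^k) \cong R^k$) are obvious isomorphisms for $k = n,m$. The Five Lemma (or a direct diagram chase) then gives $\Hom_\A(X,Y) \cong \Hom_R(F(X),F(Y))$. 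For essential surjectivity, given a finitely presented right $R$-module $V$ with presentation $R^m \xrightarrow{\phi} R^n \to V \to 0$, lift $\phi$ to a morphism $\psi \colon P^m \to P^n$ in $\A$ under the isomorphism $\Hom_R(R^m,R^n) \cong \Hom_\A(P^m,P^n)$; exactness of $F$ gives $F(\mathrm{coker}\,\psi) \cong V$.

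The work is essentially routine once Theorem \ref{PropCycles0} is in hand; I do not expect any genuine obstacle. The one point that requires attention is the reduction to \emph{finite} presentations, which is where the length hypothesis on $\A$ (specifically that every object is noetherian, so finitely generated by $P$) is used in an essential way. If this noetherian step were omitted, the natural target of $F$ would only be the finitely presented, rather than finitely generated, modules over a possibly non-artinian ring.
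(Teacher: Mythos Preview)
Your argument is correct. The paper actually provides no proof of this proposition at all: it is stated immediately after the sentence ``Restricting to length categories, we get the following,'' and is evidently meant to be read as a direct consequence of Theorem~\ref{PropCycles0} together with a standard Morita-type argument. What you have written is exactly that standard argument, spelled out in full, so your approach and the paper's (implicit) approach coincide.

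Two small remarks. First, your noetherian argument for finite generation by $P$ is fine, but in a length category an infinite direct sum of nonzero objects cannot exist, so the very definition of generator already forces the index set to be finite; this shortcut is used in the paper's proof of Lemma~\ref{Lemma3}. Second, the claim that $e\,{\rm End}_\A(M)\,e$ is right artinian is correct (the map $I \mapsto I\cdot {\rm End}_\A(M)$ embeds the right-ideal lattice of the corner into the submodule lattice of $e\,{\rm End}_\A(M)$), but it is worth a one-line justification since the paper does not record it.
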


\begin{Lemma} \label{Lemma3} Let $\A$ be a Hom-finite abelian $k$-category. If $\A$ has a generator, then $\A$ is a length category.
\end{Lemma}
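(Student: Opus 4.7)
My plan is to show every object $X \in \A$ has finite length by exhibiting it as a quotient of $M^m$ for some finite $m$, where $M$ itself is of finite length. First, since $\A$ is Hom-finite over $k$, the ring $\End_\A(M)$ is a finite-dimensional $k$-algebra, hence right artinian (as also noted in Remark (2) after Theorem \ref{PropCycles0}), so Lemma \ref{Lemma2} applies and gives that $M$ has finite length in $\A$. Hence $M^m$ has finite length for every $m \ge 1$, and it remains to show each $X$ is a quotient of some such $M^m$.

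For this, Hom-finiteness makes $\Hom_\A(M,X)$ finite-dimensional over $k$; I would fix a $k$-basis $g_1,\dots,g_m$ and consider $\phi = (g_1,\dots,g_m) \colon M^m \to X$, whose image $Y \subseteq X$ is the sum $\sum_j \mathrm{im}(g_j)$. For any $f \in \Hom_\A(M,X)$, writing $f = \sum_j c_j g_j$ and factoring $f$ through the diagonal $M \to M^m$ followed by $\phi$ shows $\mathrm{im}(f) \subseteq Y$. Since $M$ is a generator, there is an epimorphism $\bigoplus_{i \in I} M \to X$; each of its components lies in $\Hom_\A(M,X)$ and hence factors through the monomorphism $Y \hookrightarrow X$, so by the universal property of the coproduct the epimorphism itself factors through $Y \hookrightarrow X$. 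This forces $Y = X$, so $\phi$ is epic and $X$ is a quotient of $M^m$, hence of finite length.

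The one delicate step is showing $\phi$ is epic, and I would avoid the tempting alternative of working with the cokernel $X/Y$: because $M$ is not assumed projective, a nonzero morphism $M \to X/Y$ produced by the generator property cannot in general be lifted back to a morphism $M \to X$. The diagonal-factorization argument above keeps everything on the $X$ side and uses only the generator property together with the coproduct universal property.
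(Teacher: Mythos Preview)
Your proof is correct and follows exactly the same strategy as the paper: use Hom-finiteness to get $\End_\A(M)$ right artinian, invoke Lemma~\ref{Lemma2} to conclude $M$ has finite length, and then show every object is a quotient of some $M^m$. The paper simply asserts the last step (``Since $M$ is a generator and the category is Hom-finite, any object is a quotient of a finite direct sum of copies of $M$''), whereas you spell it out carefully via a $k$-basis of $\Hom_\A(M,X)$ and the coproduct universal property; your added caution about the non-projectivity of $M$ is a nice expository point but not a divergence in approach.
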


\begin{proof} Let $M$ be a generator. Since $\A$ is Hom-finite, ${\rm End}_\A(M)$ is a finite dimensional $k$-algebra and hence is (right) artinian. Since $M$ is a generator and the category is Hom-finite, any object is a quotient of a finite direct sum of copies of $M$. Thus, all objects are of finite length since $M$ is of finite length by Lemma \ref{Lemma2}.
\end{proof}

\begin{Theo} \label{Theo0} Let $\A$ be a Hom-finite abelian $k$-category. The following are equivalent.
\begin{enumerate}[$(1)$]
    \item $\A$ has a generator.
\item $\A$ has a projective generator.
\item $\A$ is equivalent to the category of finite dimensional modules over a finite dimensional $k$-algebra.
\end{enumerate}
\end{Theo}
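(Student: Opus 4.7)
The plan is to prove the three implications $(3) \Rightarrow (2) \Rightarrow (1) \Rightarrow (3)$, where the first two are essentially immediate and the last leverages the machinery already built up in the paper. For $(3) \Rightarrow (2)$, if $\A$ is equivalent to the category of finite dimensional right modules over a finite dimensional $k$-algebra $A$, then the regular module $A_A$ is a projective generator. For $(2) \Rightarrow (1)$, any projective generator is in particular a generator.

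The substantive implication is $(1) \Rightarrow (3)$. Starting from a generator $M$ of $\A$, Hom-finiteness immediately gives that $\End_\A(M)$ is a finite dimensional $k$-algebra, hence right artinian. By Lemma \ref{Lemma3}, $\A$ is a length category, so Proposition \ref{Prop1} applies and yields that $\A$ is equivalent to the category of finitely generated right modules over a right artinian ring $R$. To identify $R$ as a finite dimensional $k$-algebra, I would use the explicit form of $R$ produced by Theorem \ref{PropCycles0}: namely $R \cong \End_\A(P)^{\rm op}$ where $P$ is a projective generator which is a direct summand of $M$. Since $P$ is a direct summand of $M$, its endomorphism ring is of the form $e\,\End_\A(M)\,e$ for an idempotent $e$, which is a $k$-subspace of the finite dimensional $k$-algebra $\End_\A(M)$, hence itself finite dimensional.

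Finally, I would observe that over a finite dimensional $k$-algebra $R$, finitely generated modules are precisely the finite dimensional ones, so the equivalence of Proposition \ref{Prop1} becomes an equivalence between $\A$ and the category of finite dimensional right $R$-modules, giving $(3)$. The only point requiring any care is the passage from "right artinian ring" to "finite dimensional $k$-algebra"; this is handled by tracing back the construction in Theorem \ref{PropCycles0} to see that the relevant ring is a corner of $\End_\A(M)$, which inherits finite $k$-dimensionality from the Hom-finiteness hypothesis. I do not anticipate any genuine obstacle since every nontrivial ingredient has already been established in the paper.
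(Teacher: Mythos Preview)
Your proof is correct and follows essentially the same route as the paper, invoking Lemma~\ref{Lemma3}, Theorem~\ref{PropCycles0}, and Proposition~\ref{Prop1} in the same way (the paper factors the argument as $(1)\Rightarrow(2)\Rightarrow(3)$ rather than $(1)\Rightarrow(3)$ directly, but the content is identical). One minor simplification: since $\A$ is Hom-finite, $\End_\A(P)$ is automatically a finite dimensional $k$-algebra for \emph{any} object $P$, so the detour through the corner ring $e\,\End_\A(M)\,e$ is unnecessary.
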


\begin{proof} It is clear that $(2)$ implies $(1)$. By Theorem \ref{PropCycles0}, $(1)$ implies $(2)$. Clearly, $(3)$ implies $(2)$.
The fact that $(2)$ implies $(3)$ follows from Proposition \ref{Prop1} and Lemma \ref{Lemma3} by observing that for $P$ a projective generator, ${\rm End}_A(P)$ is a finite dimensional $k$-algebra and that the finitely generated modules over ${\rm End}_A(P)$ are the finite dimensional ones.
\end{proof}

\begin{Cor} Let $\A$ be an exact abelian extension-closed subcategory of a Hom-finite abelian $k$-category. Assume that $\A$ has finitely many indecomposable objects, up to isomorphism. Then $\A$ is equivalent to a module category over a finite dimensional $k$-algebra.
\end{Cor}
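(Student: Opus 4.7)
The plan is to realize this as a direct application of Theorem \ref{Theo0} to the subcategory $\A$ itself, so the real work is to exhibit a generator of $\A$ using the finitely many indecomposables.

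First I would observe that $\A$ inherits Hom-finiteness from the ambient category, since morphism spaces in $\A$ are $k$-subspaces (in fact equal to those of the ambient category) of finite-dimensional ones. In particular, for every $X \in \A$, the endomorphism ring $\End_\A(X)$ is a finite-dimensional $k$-algebra, hence both left and right artinian. Applying Fitting's lemma to idempotent-lifting arguments on these endomorphism rings, every object of $\A$ decomposes into a finite direct sum of indecomposables with local endomorphism rings, so the Krull--Remak--Schmidt theorem holds in $\A$.

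Next, let $X_1, \ldots, X_n$ be a complete list of the indecomposable objects of $\A$, up to isomorphism, and set $M := X_1 \oplus \cdots \oplus X_n$. By the preceding paragraph, any $X \in \A$ is isomorphic to a finite direct sum $X_{j_1}^{a_1} \oplus \cdots \oplus X_{j_r}^{a_r}$ of the $X_i$'s, and hence is a direct summand of some power $M^s$. The projection $M^s \to X$ is then a (split) epimorphism, so $M$ is a generator of $\A$. In particular, the description of objects of $\A$ as finite direct sums of the $X_i$ also shows that $\A$ is essentially small, so the running hypotheses of the paper apply.

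Finally, $\A$ is a Hom-finite abelian $k$-category admitting a generator, so Theorem \ref{Theo0} applies and yields that $\A$ is equivalent to the category of finite-dimensional right modules over a finite-dimensional $k$-algebra (one can take this algebra to be $\End_\A(M)^{\rm op}$ after passing to the projective generator furnished by Theorem \ref{PropCycles0}). No step here is genuinely hard; the only care required is to verify Krull--Schmidt inside the subcategory from Hom-finiteness, which I expect to be the most delicate point since the ambient category need not a priori have Krull--Schmidt stated as a hypothesis.
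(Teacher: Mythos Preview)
Your proposal is correct and follows exactly the paper's line: take $M$ to be the direct sum of the finitely many indecomposables and invoke Theorem~\ref{Theo0}. The paper's proof is just those two sentences; your additional checks (inherited Hom-finiteness, Krull--Schmidt from finite-dimensional endomorphism algebras, essential smallness) simply fill in details the paper leaves implicit.
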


\begin{proof}
It is clear that $\A$ has a generator $M$ by taking the direct sum of all non-isomorphic indecomposable objects. The result now follows from Theorem \ref{Theo0}.
\end{proof}


\begin{Remarks} \label{RemarkKrause}(1) Assume that $\A$ is a length category or is Hom-finite over a field. Assume that $\A$ has a generator $M$ with $R:={\rm End}_A(M)$ right artinian. The ind completion ind$\A$ of $\A$ is a Grothendieck category. By the Gabriel-Popescu theorem, ind$\A$ is equivalent to a Serre quotient of the category Mod$R$ of right $R$-modules. Thus, $\A$ is equivalent to a Serre quotient of the category mod$R$ of finitely generated right $R$-modules. Any Serre subcategory of mod$R$ is uniquely determined by a set of simple modules of mod$R$. Let $e$ be the idempotent corresponding to these simple modules. Then $\A$ is equivalent to mod$(1-e)R(1-e)$ and thus has a projective generator.

\medskip

\noindent (2) Start instead with $\A$ Grothendieck having a generator $M$ with $R:={\rm End}_A(M)$ right artinian. Again, by the Gabriel-Popescu theorem, we have that $\A$ is equivalent to a Serre quotient of the category Mod$R$. However, it is not clear that such a quotient has to be again a module category. In general, a Grothendieck category need not be equivalent to a module category. For instance, take $B = k$ where $k$ is a field and let $\mathcal{S}$ be the subcategory of Mod$k$ of all finite dimensional $k$-vector spaces. Then $\mathcal{S}$ is a Serre subcategory of Mod$k$ and Mod$k/\mathcal{S}$ is not a module category as it has no indecomposable object.
\end{Remarks}

\section{Exact abelian extension-closed subcategories}

%
%

An object $X$ in $\A$ is called \emph{Schur} if ${\rm End}_\A(X)$ is a division ring.
Clearly, any Schur object is indecomposable and any simple object is Schur, by Schur's lemma.
Let $\A$ be a length category. In this section, we describe all exact abelian extension-closed subcategories of $\A$ in terms of their simple objects.

\medskip

Two objects $X,Y \in \A$ are \emph{Hom-orthogonal} provided $$\Hom_\A(X,Y)=0=\Hom_\A(Y,X).$$ Given a set of objects $\mathcal{O}$ in $\A$, we let $\C(\mathcal{O})$ denote the smallest exact abelian extension-closed subcategory of $\A$ containing the objects from $\mathcal{O}$. Let $\mathfrak{S}$ be the set such that an element $\mathcal{S} \in \mathfrak{S}$ is a collection of non-isomorphic Schur objects that are pairwise Hom-orthogonal. If $\T$ is an exact abelian extension-closed subcategory of $\A$, we let $S(\T)$ denote a complete set of representatives of the simple objects in $\T$. Clearly, $S(\T) \in \mathfrak{S}$. For $\mathcal{S}_1, \mathcal{S}_2 \in \mathfrak{S}$, we set $\mathcal{S}_1 = \mathcal{S}_2$ if the elements can be pairwise identified by isomorphisms.

\begin{Prop}
Assume that $\A$ is a length category. Then $\mathcal{S}$ in $\mathfrak{S}$ forms the non-isomorphic simple objects of $\C(\mathcal{S})$.
\end{Prop}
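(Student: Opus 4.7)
The plan is to show that $\C(\mathcal{S})$ coincides with the full subcategory $\mathcal{D}\subseteq\A$ of objects admitting a finite filtration $0=X_0\subset X_1\subset\cdots\subset X_n=X$ with every factor $X_i/X_{i-1}\in\mathcal{S}$. Once this equality is known, the identification of simples is immediate: for $X\in\mathcal{S}$, any non-zero subobject $Y\subseteq X$ with $Y\in\mathcal{D}$ contains the first term $T\in\mathcal{S}$ of an $\mathcal{S}$-filtration of $Y$, and the resulting monomorphism $T\hookrightarrow X$ forces $T\cong X$ by Hom-orthogonality, whereupon $T=X$ as a subobject because a non-zero endomorphism of the Schur object $X$ is invertible. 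Conversely, any simple object of $\mathcal{D}$ must equal the first term of its own $\mathcal{S}$-filtration and hence lies in $\mathcal{S}$.

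To establish $\C(\mathcal{S})=\mathcal{D}$, note that $\mathcal{D}\subseteq\C(\mathcal{S})$ is clear from extension-closure of $\C(\mathcal{S})$, and $\mathcal{D}$ itself is extension-closed by concatenating filtrations. The remaining task is to show $\mathcal{D}$ is closed under kernels and cokernels (computed in $\A$) of morphisms between its objects, which I would prove by induction on the length in $\A$ of the source; this length is finite because $\A$ is a length category, and the base case is vacuous.

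For the inductive step, take $f\colon X_1\to X_2$ in $\mathcal{D}$ and let $S\subseteq X_1$ be the first term of an $\mathcal{S}$-filtration of $X_1$. If $f|_S=0$, then $f$ factors as $\bar f\circ\pi$ for some $\bar f\colon X_1/S\to X_2$, and since $X_1/S$ is strictly shorter, the induction hypothesis gives $\ker\bar f,\operatorname{coker}\bar f\in\mathcal{D}$; then $\operatorname{coker} f=\operatorname{coker}\bar f$, and the short exact sequence $0\to S\to\ker f\to\ker\bar f\to 0$ combined with extension-closure yields $\ker f\in\mathcal{D}$. The substantive case is $f|_S\neq 0$: choosing an $\mathcal{S}$-filtration $0=X_2^0\subset\cdots\subset X_2^m=X_2$, let $j$ be minimal with $f(S)\subseteq X_2^j$. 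The composition $S\to X_2^j\to X_2^j/X_2^{j-1}$ is non-zero by minimality of $j$, so Hom-orthogonality of $\mathcal{S}$ forces $S\cong X_2^j/X_2^{j-1}$, and this composite becomes a non-zero, hence invertible, endomorphism of the Schur object $S$. This splits $X_2^j\cong X_2^{j-1}\oplus f(S)$ with $f(S)\cong S$, and consequently $X_2/f(S)$ inherits an $\mathcal{S}$-filtration. Applying the induction hypothesis to $\bar f\colon X_1/S\to X_2/f(S)$ and then identifying $\ker\bar f$ with $\ker f$ (using $S\cap\ker f=0$, which follows because $f|_S$ is a monomorphism) and $\operatorname{coker}\bar f$ with $\operatorname{coker} f$ completes the inductive step.

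The main obstacle is precisely this second case: one must upgrade the mere non-vanishing of $f|_S$ into the strong structural conclusion that a copy of $S$ splits off a specific segment of the filtration of $X_2$, and this is exactly where the combination of Hom-orthogonality with the Schur condition does the essential work. The rest is bookkeeping with short exact sequences.
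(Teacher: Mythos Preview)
Your proof is correct and follows the same overall strategy as the paper: both identify $\C(\mathcal{S})$ with the subcategory of $\mathcal{S}$-filtered objects (your $\mathcal{D}$ is exactly the paper's $\B$) by checking closure under kernels, cokernels, and extensions, and then read off the simple objects.

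The execution of the kernel/cokernel closure differs. The paper inducts on $\ell(X)+\ell(Y)$ and works with the \emph{top} quotients $S_1,S_2\in\mathcal{S}$ of the two filtrations, leading to a case analysis on whether the composite $X'\hookrightarrow X\xrightarrow{f} Y\twoheadrightarrow S_2$ vanishes, with further sub-cases inside. You induct only on $\ell(X_1)$, work with the \emph{bottom} subobject $S\subseteq X_1$, and in the non-vanishing case use Hom-orthogonality together with the Schur condition to split a copy of $S$ out of the filtration of $X_2$, after which the snake lemma identifies $\ker f\cong\ker\bar f$ and $\operatorname{coker} f\cong\operatorname{coker}\bar f$ directly. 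Your route is shorter and makes the essential use of the Schur hypothesis more visible; the paper's version is a bit more symmetric in source and target but pays for it with extra bookkeeping.
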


\begin{proof}
Let $\mathcal{S} \in \mathfrak{S}$. We define a full subcategory $\B$ of $\C(\mathcal{S})$ as follows. We declare that $\mathcal{S} \subseteq \B$ and $0 \in \B$. If $X$ in $\C(\mathcal{S})$ is the middle term of a short exact sequence
$$0 \to X' \to X \to S \to 0$$
with $X' \in \B$ and $S \in \mathcal{S}$, then we declare that $X \in \B$.
We prove that $\B=\C(\mathcal{S})$, from which the result will follow. It is sufficient to prove that $\B$ is closed under kernels, cokernels and extensions. Let $f: X \to Y$ be a non-zero morphism with $X,Y \in \B$. We prove by induction on $\ell(X)+\ell(Y)$ that the kernel $K$ of $f$ and the cokernel $C$ of $f$ lie in $\B$ (length is taken in $\A$). Consider the short exact sequences
$$0 \to X' \stackrel{u_X}{\longrightarrow} X \stackrel{v_X}{\longrightarrow} S_1 \to 0$$
and
$$0 \to Y' \stackrel{u_Y}{\longrightarrow} Y \stackrel{v_Y}{\longrightarrow} S_2 \to 0$$
where $S_1, S_2 \in \mathcal{S}$. Note that $\ell(X') \le \ell(X)-1$ and $\ell(Y') \le \ell(Y)-1$.  Assume first that $v_Yfu_X \ne 0$. Consider the commutative diagram
$$\xymatrix{0 \ar[r] & X' \ar[r]^{u_X}\ar[d]^{fu_X} & X \ar[r]^{v_X}\ar[d]^f & S_1 \ar[r]\ar[d] & 0\\ 0\ar[r] &  Y \ar@{=}[r] & Y \ar[r]  & 0\ar[r] & 0}$$
Set $K'$ the kernel of $fu_X$ and $C'$ its cokernel. Since $fu_X$ is a non-zero morphism and $\ell(X')+\ell(Y)<\ell(X) + \ell(Y)$, by induction, $K',C'$ lie in $\B$. Assume as a first case that the induced morphism $h: S_1 \to C'$ is non-zero. Now, $C$ is the cokernel of $h$ where $\ell(S_1) + \ell(C') < \ell(X)+ \ell(Y)$. Therefore, $C$ has to be in $\B$ by induction. Let $Z$ denote the kernel of $h$. Again, we know that $Z$ lies in $\B$. If $Z$ is not in $\mathcal{S}$ and is non-zero, then by definition of $\B$, there is a proper subobject $Z'$ of $Z$ which is in $\B$. Since the length of $Z$ is finite, we see that $Z$ has to have a proper subobject in $\mathcal{S}$, and hence that $S_1$ has to have a proper subobject in $\mathcal{S}$, which contradicts that $\mathcal{S}$ is Hom-orthogonal. Therefore, $Z=0$ and $h$ is a monomorphism. Thus, $K \cong K' \in \B$. Assume now that $h = 0$. Then $K$ is an extension of $S_1$ by $K' \in \B$ so $K \in \B$ by definition. Similarly, we get $C \in \B$. So assume that $v_Yfu_X = 0$. We get a commutative diagram
$$\xymatrix{0 \ar[r] & X' \ar[r]^{u_X}\ar[d]^{f'} & X \ar[r]^{v_X}\ar[d]^f & S_1 \ar[r]\ar[d]^{f''} & 0\\ 0\ar[r] &  Y' \ar[r]^{u_Y} & Y \ar[r]^{v_Y}  & S_2\ar[r] & 0}$$
If $f''$ is non-zero, then it needs to be an isomorphism. Therefore, we have $K \cong K'$, $C\cong C'$ and, by induction, $K,C \in \B$. Otherwise, $f''=0$. Either $K \cong K' \in \B$ or else, $K$ is an extension of $S_1$ by $K'\in \B$ so $K \in \B$. Similarly, $C \in \B$. It remains to prove that $\B$ is closed under extensions. Consider a short exact sequence
$$0 \to U \to V \to W \to 0$$
where $U,W \in \B$. We prove by induction on the length of $V$ that $V \in \B$. If $W$ is in $\mathcal{S}$, then we are done. Otherwise, $W$ has a proper subobject $W'$ in $\B$ with corresponding quotient an object $S \in \mathcal{S}$. Consider the pullback $E$ of the inclusion $W' \to W$ and the morphism $V \to W$. We have a short exact sequence
$$0 \to U \to E \to W' \to 0.$$
Since $\ell(E)= \ell(U) + \ell (W') < \ell (U) + \ell(W) = \ell(V)$, by induction, we have that $E \in \B$. Now, the short exact sequence
$$0 \to E \to V \to S \to 0$$
yields $V \in \B$.
\end{proof}

The following result follows from the last proposition.

\begin{Theo}
Assume that $\A$ is an abelian length category. Then there is a one-to-one correspondence between $\mathfrak{S}$ and the exact abelian extension-closed subcategories of $\A$. If $\mathcal{S} \in \mathfrak{S}$, then $\C(\mathcal{S})$ is the corresponding exact abelian extension-closed subcategory. If $\T$ is exact abelian extension-closed, then $S(\T)$ is the corresponding element in $\mathfrak{S}$.
\end{Theo}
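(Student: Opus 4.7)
The plan is to exhibit the assignments $\mathcal{S} \mapsto \C(\mathcal{S})$ and $\T \mapsto S(\T)$ as mutually inverse bijections. First I would verify that both maps are well defined: if $\T$ is an exact abelian extension-closed subcategory of $\A$, then $\T$ is itself an abelian category, and for any simple $X \in \T$ one has $\End_\A(X) = \End_\T(X)$ (since $\T$ is a full subcategory of $\A$), which is a division ring by Schur's lemma inside $\T$. The same full-subcategory observation shows that two non-isomorphic simples of $\T$ are Hom-orthogonal in $\A$, so $S(\T) \in \mathfrak{S}$. Conversely, $\C(\mathcal{S})$ is by construction an exact abelian extension-closed subcategory of $\A$, so the other assignment is also well defined.

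The identity $S(\C(\mathcal{S})) = \mathcal{S}$ is precisely the content of the previous proposition, so one composition is the identity. It remains to establish $\C(S(\T)) = \T$ for every exact abelian extension-closed subcategory $\T$. The inclusion $\C(S(\T)) \subseteq \T$ is automatic by the minimality in the definition of $\C(-)$, since $\T$ is exact abelian extension-closed and contains $S(\T)$. For the reverse inclusion, I would argue by induction on the length $\ell_\A(X)$ of $X \in \T$, which is finite because $\A$ is a length category. The base case $\ell_\A(X) = 1$ is handled by the remark that if $X \in \T$ is simple in $\A$, then it has no proper nonzero subobject in $\A$ and hence none in $\T$, so $X \in S(\T) \subseteq \C(S(\T))$.

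For the inductive step, the key point is that $\T$ itself is a length category: kernels, cokernels and images computed in $\T$ coincide with those in $\A$ (since $\T$ is exact abelian), so a subobject of $X$ in $\T$ is in particular a subobject of $X$ in $\A$, bounding the length in $\T$ by $\ell_\A(X)$. Thus any nonzero $X \in \T$ admits a short exact sequence
$$0 \to X' \to X \to S \to 0$$
in $\T$ with $S$ simple in $\T$, hence $S \in S(\T)$, and with $\ell_\A(X') < \ell_\A(X)$. By induction $X', S \in \C(S(\T))$, and extension-closure of $\C(S(\T))$ yields $X \in \C(S(\T))$. The main (minor) obstacle is precisely this identification of intrinsic versus ambient length and subobject structure in $\T$; once that is in place, the inductive extension argument is routine.
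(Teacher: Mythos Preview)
Your proof is correct and follows essentially the same route as the paper, which simply records that the theorem follows from the preceding proposition. You have filled in precisely the details the paper leaves implicit: well-definedness of both assignments, the identity $S(\C(\mathcal{S}))=\mathcal{S}$ from the proposition, and the inclusion $\T\subseteq \C(S(\T))$ via induction on $\ell_\A$ using that $\T$ inherits the length property from $\A$.
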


A full subcategory $\B$ of $\A$ is \emph{thick} if it is closed under direct summands, under extensions, under kernels of epimorphisms and cokernels of monomorphisms. Clearly, if $\B$ is exact abelian extension-closed, then $\B$ is thick. The converse is not true. However, if $\A$ is hereditary, thick is equivalent to being exact abelian and extension-closed; see \cite{IPT}, for instance. Hence, we get the following.

\begin{Theo}
Assume that $\A$ is a hereditary abelian length category. Then there is a one-to-one correspondence between $\mathfrak{S}$ and the thick subcategories of $\A$. If $\mathcal{S} \in \mathfrak{S}$, then $\C(\mathcal{S})$ is the corresponding thick subcategory. If $\T$ is thick, then $S(\T)$ is the corresponding element in $\mathfrak{S}$.
\end{Theo}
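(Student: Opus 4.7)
The plan is to derive this theorem as an immediate corollary of the immediately preceding theorem (the bijection between $\mathfrak{S}$ and exact abelian extension-closed subcategories of $\A$), using the characterisation of thick subcategories available in the hereditary case.

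First, I would verify that any exact abelian extension-closed subcategory of $\A$ is automatically thick: closure under kernels and cokernels in $\A$ implies closure under kernels of epimorphisms and cokernels of monomorphisms, extension closure is built in, and closure under direct summands follows because for any decomposition $Z = X \oplus Y$ with $Z$ in the subcategory, $X$ arises as the kernel of the projection $Z \to Y$ (equivalently the image of an idempotent). Consequently every $\C(\mathcal{S})$ is a thick subcategory of $\A$, so the map $\mathcal{S} \mapsto \C(\mathcal{S})$ lands inside the thick subcategories in the hereditary setting as well.

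Second, I invoke the hereditary hypothesis through the result recalled in the paragraph preceding the theorem (with reference to \cite{IPT}): for a hereditary abelian category, every thick subcategory is in fact exact abelian and extension-closed. Combined with the previous step, this yields an equality of collections: the thick subcategories of $\A$ are precisely the exact abelian extension-closed subcategories of $\A$. In particular, if $\T$ is thick then the set $S(\T)$ makes sense and, being the set of simple objects of an exact abelian extension-closed subcategory, lies in $\mathfrak{S}$.

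Finally, I transport the bijection from the preceding theorem. Under the identification of the two classes of subcategories, the assignments $\mathcal{S} \mapsto \C(\mathcal{S})$ and $\T \mapsto S(\T)$ are mutually inverse, since they were already shown to be so on the exact abelian extension-closed side. The only non-formal content used is the hereditary-case coincidence of thickness with exact abelian extension-closedness, which is cited as a black box; this is exactly where the hereditary hypothesis enters and is the sole potential obstacle, but in our context it is simply quoted from \cite{IPT}.
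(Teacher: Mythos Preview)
Your proposal is correct and matches the paper's approach exactly: the paper states the theorem without a separate proof, deriving it from the preceding bijection theorem together with the remark (citing \cite{IPT}) that in the hereditary case thick subcategories coincide with exact abelian extension-closed ones. One tiny wording issue: in your direct-summand argument, ``$X$ is the kernel of the projection $Z\to Y$'' presumes $Y$ is already in the subcategory, but your parenthetical via the idempotent $Z\to Z$ is the correct fix and is what actually does the work.
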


\begin{Remark}
Note that the assumption of $\A$ being a length category is essential. If $\A$ is not artinian or not noetherian, then ${\rm fl}(\A)$ is exact abelian extension-closed and has the same simple objects at the ones of $\A$ but ${\rm fl}(\A) \ne \A$. Therefore, the exact abelian extension-closed subcategories are determined by their simple objects if and only if $\A$ is a length category.
\end{Remark}

\section{Hereditary categories with generators}

Let $\A$ be a Hom-finite hereditary abelian $k$-category where $k$ is an algebraically closed field. If $\A$ has a generator, then by Theorem \ref{Theo0}, we know that $\A$ is equivalent to the module category of a finite dimensional algebra. Since $\A$ is hereditary and $k = \bar k$, this yields $\A \cong \rep(Q)$ for some finite acyclic quiver $Q$.
On the other hand, if $Q$ is a finite acyclic quiver, then the category $\A:=\rep(Q)$ of finite dimensional representations of $Q$ is an hereditary abelian $k$-category and is a length category. Hence, all the results obtained so far apply.

\medskip

Assume now that $Q$ is a finite acyclic quiver having $n$ vertices $\{1,2,\ldots,n\}$. To each $M \in \rep(Q)$, we can associate its dimension vector $d_M \in (\Z_{\ge 0})^n$ such that, for $1 \le i \le n$, the $i$-th entry of $d_M$ is the dimension over $k$ of $M(i)$. The dimension vector of a Schur object in $\rep(Q)$ is called a \emph{Schur root}. Schur roots are extensively studied in geometric representation theory. Let $d=(d_1, \ldots, d_n)\in (\Z_{\ge 0})^n$. Consider $\rep(Q,d)$ the space of all representations $M$ with $M(i)=k^{d_i}$. We can consider the full subcategory $\A(d)$ of $\rep(Q)$ with
$$\A(d) = \{X \in \rep(Q) \mid \Hom(X,N)=0=\Ext^1(X,N)\; \text{for some}\;N\in\rep(Q,d)\}.$$
It is proven in \cite{PW} that this subcategory is thick and that it has a projective generator if and only if $d=d_V$ for some $V$ with $\Ext^1(V,V)=0$. This subcategory has the feature that if $X$ is a simple object of it with $\Ext^1(X,X)\ne 0$, then there are infinitely many non-isomorphic simple objects with dimension vector $d_X$ in $\A(d)$. 


\begin{Theo} The following are equivalent.
\begin{enumerate}[$(1)$]
    \item The category $\A(d)$ has a generator,
\item The category $\A(d)$ has a projective generator,
\item There is a finite acyclic quiver $Q'$ with $\A(d) \cong \rep(Q')$,
\item We have that $d$ is the dimension vector of some $V$ with $\Ext^1(V,V)=0$.
\end{enumerate}
\end{Theo}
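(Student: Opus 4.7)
The plan is to cycle through the four statements, using Theorem \ref{Theo0} as the engine for $(1) \Leftrightarrow (2) \Leftrightarrow (3)$ and invoking the result of \cite{PW} for $(2) \Leftrightarrow (4)$. The implications $(2) \Rightarrow (1)$ and $(3) \Rightarrow (2)$ are immediate: a projective generator is \emph{a fortiori} a generator, and for any finite acyclic quiver $Q'$ the direct sum of the indecomposable projective representations attached to the vertices of $Q'$ is a projective generator of $\rep(Q')$.

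For $(1) \Rightarrow (2)$, I would first verify that $\A(d)$ is itself a Hom-finite abelian $k$-category. By \cite{PW} the subcategory $\A(d)$ is thick in $\rep(Q)$, and since $\rep(Q)$ is hereditary, thick subcategories automatically coincide with exact abelian extension-closed ones. Thus $\A(d)$ is an abelian category on its own, with Hom-spaces inherited from the Hom-finite ambient $\rep(Q)$. Theorem \ref{Theo0} then directly promotes any generator of $\A(d)$ to a projective generator. For $(2) \Rightarrow (3)$, Theorem \ref{Theo0} also yields $\A(d) \cong \mmod B$ for the finite dimensional $k$-algebra $B := \End_{\A(d)}(P)^{\rm op}$, where $P$ is such a projective generator. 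Being a full exact abelian extension-closed subcategory of the hereditary $\rep(Q)$, $\A(d)$ is itself hereditary, so $B$ is a hereditary finite dimensional $k$-algebra; since $k = \bar k$, $B$ is Morita equivalent to the path algebra $kQ'$ of some finite acyclic quiver $Q'$, giving $\A(d) \cong \rep(Q')$. The equivalence $(2) \Leftrightarrow (4)$ is precisely the content of \cite{PW} and requires no further argument.

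The only step that demands real attention is checking that Theorem \ref{Theo0} applies to $\A(d)$ in its own right, rather than merely to the ambient category; this reduces to the observation that thickness in a hereditary Hom-finite $k$-category forces $\A(d)$ to inherit the structure of a Hom-finite abelian $k$-category. Once this is in place, the theorem assembles itself from Theorem \ref{Theo0}, the standard description of hereditary finite dimensional algebras over an algebraically closed field as path algebras of finite acyclic quivers, and the result cited from \cite{PW}, with no further computation required.
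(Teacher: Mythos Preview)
Your proposal is correct and follows essentially the same approach as the paper: use Theorem~\ref{Theo0} for the equivalence of $(1)$, $(2)$, $(3)$, noting that the resulting finite dimensional algebra must be hereditary (hence a path algebra since $k=\bar k$), and invoke \cite{PW} for $(2)\Leftrightarrow(4)$. You are simply more explicit than the paper about checking that $\A(d)$ is a Hom-finite abelian $k$-category in its own right, which is a reasonable thing to spell out.
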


\begin{proof}
If $\A(d)$ is equivalent to a category of finite dimensional modules over a finite dimensional $k$-algebra $A$, then $A$ has to be hereditary. Therefore, $A \cong kQ'$ for some finite acyclic quiver $Q'$, meaning that $\A(d) \cong \rep(Q')$. Thus, the equivalence of the first three statements follow from Theorem \ref{Theo0}. The equivalence of $(2)$ and $(4)$ follows from \cite{PW}.
\end{proof}

\begin{Exam} Let $Q$ be the Kronecker quiver, that is, the quiver with two vertices and two arrows pointing in the same direction. Let $d = (1,1)$. The category $\A(d)$ is the full subcategory of regular representations of $Q$. The simple objects of $\A(d)$ are indexed by $\mathbb{P}^1(k)$ and all have dimension vector $(1,1)$. It is not hard to check that if $V \in \rep(Q,d)$, then $\Ext^1(V,V) \ne 0$. It follows from the last theorem that $\A(d)$ has no generator and no projective generator. Any subset of $\mathbb{P}^1(k)$ will give rise to a thick subcategory of $\rep(Q)$ contained in $\A(d)$. In this special example, the simple objects of $\A(d)$ are all the Schur objects of $\A(d)$. Therefore, the thick subcategories of $\A(d)$ are indexed by the subsets of $\mathbb{P}^1(k)$. Since any simple object in $\A(d)$ has a self-extension, the only thick subcategory of $\A(d)$ that has a generator is the trivial one coming from $\emptyset \subseteq \mathbb{P}^1(k)$.
\end{Exam}

\begin{Exam} Let $Q=(Q_0, Q_1)$ be the infinite quiver as follows. Its underlying graph is a binary tree where all vertices but one, say $a$, have weight $3$. We choose the orientation of $Q$ so that $Q$ has a unique source vertex $a$ and all vertices but $a$ have one incoming arrow and two outgoing arrows. We consider the category $\rep^+(Q)$ of finitely presented representations of $Q$. This is a Hom-finite hereditary abelian $k$-category; see \cite{}. Consider the projective representation $P_a$ at $a$. We have $P_a(x)=k$ for all $x \in Q_0$ and $P(\alpha)=1$ for all $\alpha \in Q_1$. Then $P$ is neither noetherian nor artinian. By Lemma \ref{Lemma3}, $\rep^+(Q)$ has no generator and no projective generator. Note that $\rep^+(Q)$ has enough projective objects, though.
\end{Exam}

\begin{Exam} Let $Q$ be any infinite quiver and let $\A:= {\rm Rep}(Q)$ the category of all representations of $Q$. This is a Grothendieck abelian $k$-category (but not Hom-finite). It clearly has a projective generator $P$, however, $P$ is not compact. In fact, any projective generator is not compact. Thus, ${\rm Rep}(Q)$ is not equivalent to a module category.
\end{Exam}

\end{document}